\numberwithin{equation}{section}
\newtheorem{theorem}{Theorem}[section]
\newtheorem{corollary}[theorem]{Corollary}
\newtheorem{example}[theorem]{Example}
\newtheorem{proposition}[theorem]{Proposition}
\newtheorem{remark}[theorem]{Remark}
\begin{document}
\title[Relations between Chebyshev, Fibonacci
and Lucas polynomials]{Relations between Chebyshev, Fibonacci
and Lucas polynomials via trigonometric sums}
\author[L. Smajlovi\'{c}]{Lejla Smajlovi\'{c}}
\address{School of Economics and Business\\
University of Sarajevo\\
Trg Oslobodjenja Alija Izetbegovi\'{c} 1\\
71 000 Sarajevo\\
Bosnia and Herzegovina}
\email{lejla.smajlovic@efsa.unsa.ba}
\author[Z. \v{S}abanac]{Zenan \v{S}abanac}
\address{Department of Mathematics and Computer Science\\
University of Sarajevo\\
Zmaja od Bosne 33-35\\
71 000 Sarajevo\\
Bosnia and Herzegovina}
\email{zsabanac@pmf.unsa.ba}
\author[L. \v{S}\'{c}eta]{Lamija \v{S}\'{c}eta}
\address{School of Economics and Business\\
University of Sarajevo\\
Trg Oslobodjenja Alija Izetbegovi\'{c} 1\\
71 000 Sarajevo\\
Bosnia and Herzegovina}
\email{lamija.sceta@efsa.unsa.ba}

\begin{abstract}
In this paper we derive some new identities involving the Fibonacci and
Lucas polynomials and the Chebyshev polynomials of the first and the second kind. Our starting point is a finite trigonometric sum which equals the resolvent kernel on the discrete circle with $m$ vertices and which can be evaluated in two different ways. An expression for this sum in terms of the Chebyshev polynomials was deduced in \cite{JKS} and the expression in terms of the Fibonacci and Lucas polynomials is deduced in this paper. As a consequence, we establish some further identities involving trigonometric sums
and Fibonacci, Lucas, Pell and Pell-Lucas polynomials and numbers, thus providing a "physical" interpretation for those identities. Moreover, the finite trigonometric sum of the type considered in this paper can be related to the effective resistance between any two vertices of the $N$-cycle graph with four nearest neighbors $C_{N}(1,2)$. This yields further identities involving Fibonacci numbers.
\end{abstract}

\subjclass{11B39, 33C45}
\keywords{Fibonacci and Lucas polynomials and numbers, trigonometric sums, Chebyshev
polynomials, resolvent kernel, effective resistance}
\maketitle

\section{Introduction and statement of main results}

The Fibonacci sequence $%
\left\{ F_{n}\right\} _{n=0}^{\infty }$ is defined by recursive relation%
\begin{equation*}
F_{0}=0,\text{ }F_{1}=1,\text{ }F_{n}=F_{n-1}+F_{n-2}\text{ for }n\geq 2,
\end{equation*}%
and the Lucas sequence $\left\{ L_{n}\right\} _{n=0}^{\infty }$ is defined by%
\begin{equation*}
L_{0}=2,\text{ }L_{1}=1,\text{ }L_{n}=L_{n-1}+L_{n-2}\text{ for }n\geq 2.
\end{equation*}

Both sequences arise in many applications across many mathematical and
non-mathematical disciplines, such as art, architecture, biology, chemistry,
chess, electrical engineering, music, origami, poetry, physics, physiology,
neurophysiology, and many others (see \cite{K1}). Recent applications include
algebraic coding theory, linear sequential circuits, quasicrystals,
phyllotaxies, biomathematics, and computer science (see, e.g. \cite{FSZ, HKN12, Rey}).

A vast collection of interesting identities and many other properties involving
Fibonacci and Lucas numbers, as well as other integer sequences, can be
found in \cite{K1, K2}. The original proofs of many of these identities
employ diverse techniques, including reciprocal sums, Binet-like formulas,
generating functions, tiling, combinatorial approaches, determinant and
matrix methods.

The Fibonacci polynomials are defined by the recurrence relation:
\begin{equation}\label{eq. recurr fib}
F_{0}(x)=0,\text{ }F_{1}(x)=1,\text{ }F_{n}(x)=xF_{n-1}(x)+F_{n-2}(x)\text{
for }n\geq 2\text{.}
\end{equation}
The Lucas polynomials satisfy the same recurrence relation, with a different initial condition:%
\begin{equation}\label{eq. recurr lucas}
L_{0}(x)=2\text{, }L_{1}(x)=x\text{, and }L_{n}(x)=xL_{n-1}(x)+L_{n-2}(x)%
\text{ for }n\geq 2\text{.}
\end{equation}
The Fibonacci and Lucas numbers are special values of the Fibonacci and Lucas polynomials: $F_{n}(1)=F_{n}$ and $L_{n}(1)=L_{n}$ for nonnegative integers $n$.

The Fibonacci and Lucas polynomials and their generalizations appear in different applications in various disciplines such as physics, biology, computer science, and statistics, see \cite{K1, K2}. For example, both fractional and higher-order derivatives can be easily calculated using Lucas and Fibonacci polynomial relations, see \cite{E-S23, KA23}. Lucas polynomials arise in numerical methods for solving differential and integral equations \cite{CSG15, Na17, Or17, AHNB21,NSA22}.
The expressions for the Fibonacci and Lucas polynomials in terms of the Chebyshev polynomials have been derived in \cite{A-EYE-SS17, A-EY18, KKJD19} and corollaries of \cite[Theorem 3]{A-EN23}, however, the coefficients are rather complicated as they are expressed in terms of the hypergeometric functions. This is expected, because the recurrence relations \eqref{eq. recurr fib} and \eqref{eq. recurr lucas} indicate that the Chebyshev polynomials of the first and the second kind are equivalent to the Fibonacci and Lucas polynomials, when viewed as the generalized Fibonacci polynomial sequences, see \cite{FMcAM18}.

Many identities involving the Fibonacci and Lucas polynomials have been discovered so far. A very long list of those identities is provided in \cite{K2}, see also \cite{YZ02, WZ13, FG21}. The aim of this paper is to provide a "physical" interpretation behind the new and the known identities. Namely, the identities derived in this paper stem from different evaluations of the same trigonometric sum, which possesses a physical interpretation. Let us describe our two main results in a greater detail.

The starting point for our first main result is an observation that the zeros of Fibonacci and Lucas polynomials can be expressed in terms of basic trigonometric functions (see formula \eqref{roots} below). Therefore, the logarithmic derivative of those polynomials is equal to a certain finite trigonometric sum. Curiously, this sum, after some trivial manipulations can be identified with the multiple of a special value of the resolvent kernel on the discrete circle $X_m$ with $m$ vertices, twisted by an additive character. By using results of \cite{JKS} in which this sum is evaluated in terms of the Chebyshev polynomials, we deduce a new identity relating the Fibonacci and Lucas polynomials with the Chebyshev polynomials of the first and the second kind, as stated in the following theorem.
\begin{theorem}
\label{th1} The following identities hold true for all positive integers $n$:
\begin{equation}
\frac{L_{n}\left( 2x\right) }{4x\left( x^{2}+1\right) F_{n}\left( 2x\right) }%
=\frac{U_{n-1}\left( 2x^{2}+1\right) }{T_{n}\left( 2x^{2}+1\right) +\left(
-1\right) ^{n-1}}  \label{F1}
\end{equation}
and
\begin{equation}
\frac{F_{n}\left( 2x\right) }{xL_{n}\left( 2x\right) }=\frac{U_{n-1}\left(
2x^{2}+1\right) }{T_{n}\left( 2x^{2}+1\right) +\left( -1\right) ^{n}}.
\label{F2}
\end{equation}
\end{theorem}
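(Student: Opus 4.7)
The plan is to rewrite the left-hand side of \eqref{F1} as a symmetric finite trigonometric sum, apply the Chebyshev-polynomial evaluation of that sum provided by \cite{JKS}, and then deduce \eqref{F2} from \eqref{F1} via an elementary Chebyshev identity. My starting point is the trigonometric representation of the zeros of $F_{n}$: from the Binet-type formula one checks that $F_{n}(2i\cos\phi)=-i^{n+1}\sin(n\phi)/\sin\phi$, so the zeros are $t_{k}=2i\cos(k\pi/n)$ for $k=1,\dots,n-1$. Differentiating the same Binet formula yields the polynomial identity $nL_{n}(t)=(t^{2}+4)F_{n}'(t)+tF_{n}(t)$. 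Setting $t=2x$ and dividing by $4nx(x^{2}+1)F_{n}(2x)$ rewrites the left-hand side of \eqref{F1} as
\[
\frac{L_{n}(2x)}{4x(x^{2}+1)F_{n}(2x)}=\frac{F_{n}'(2x)}{nxF_{n}(2x)}+\frac{1}{2n(x^{2}+1)}.
\]

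Next I would expand $F_{n}'(2x)/F_{n}(2x)$ from the zero factorization and pair conjugate indices $k$ and $n-k$, using $\cos((n-k)\pi/n)=-\cos(k\pi/n)$. For even $n$ the middle index $k=n/2$ contributes an extra real term $1/(2nx^{2})$, while for odd $n$ it is absent; together with the $1/(2n(x^{2}+1))$ term above, which supplies the missing $k=0$ summand, both parities of $n$ collapse into the single expression
\[
\frac{L_{n}(2x)}{4x(x^{2}+1)F_{n}(2x)}=\frac{1}{2n}\sum_{k=0}^{n-1}\frac{1}{x^{2}+\cos^{2}(k\pi/n)}.
\]
Applying $\cos^{2}(k\pi/n)=(1+\cos(2k\pi/n))/2$ and setting $y=2x^{2}+1$ turns this into $\tfrac{1}{n}\sum_{k=0}^{n-1}(y+\cos(2k\pi/n))^{-1}$, which is the resolvent-kernel sum alluded to in the introduction. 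The substitution $y=(w+w^{-1})/2$ yields the product formula $\prod_{k=0}^{n-1}(y+\cos(2k\pi/n))=(T_{n}(y)+(-1)^{n-1})/2^{n-1}$, and logarithmic differentiation in $y$ together with $T_{n}'(y)=nU_{n-1}(y)$ gives the \cite{JKS} evaluation
\[
\sum_{k=0}^{n-1}\frac{1}{y+\cos(2k\pi/n)}=\frac{nU_{n-1}(y)}{T_{n}(y)+(-1)^{n-1}},
\]
which is exactly $n$ times the right-hand side of \eqref{F1} and completes its proof.

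For \eqref{F2} I would use the Chebyshev Pythagorean identity $T_{n}(y)^{2}-(y^{2}-1)U_{n-1}(y)^{2}=1$, rewritten as $(T_{n}(y)+(-1)^{n-1})(T_{n}(y)+(-1)^{n})=(y^{2}-1)U_{n-1}(y)^{2}$. Since $y=2x^{2}+1$ gives $y^{2}-1=4x^{2}(x^{2}+1)$, dividing this factorization by $U_{n-1}(y)(T_{n}(y)+(-1)^{n-1})$ and substituting the already-proven \eqref{F1} for $U_{n-1}/(T_{n}+(-1)^{n-1})$ yields \eqref{F2} after a one-line simplification. The main obstacle I anticipate is the parity-uniform collapse to the symmetric sum: the left-hand side of \eqref{F1} looks slightly different in the two parities (for even $n$ the factor $x$ in the denominator cancels against the middle zero of $F_{n}(2x)$, whereas for odd $n$ it arises from the middle zero of $L_{n}(2x)$), and it takes careful bookkeeping to verify that in both cases the sum $\sum_{k=0}^{n-1}1/(x^{2}+\cos^{2}(k\pi/n))$ is precisely what appears; once this is done, the \cite{JKS} identification and the passage from \eqref{F1} to \eqref{F2} are automatic.
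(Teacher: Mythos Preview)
Your argument is correct. The core idea coincides with the paper's---take the logarithmic derivative of $F_{n}$ over its trigonometric zeros, recognize the resulting sum as the resolvent-kernel sum, and identify it with the Chebyshev ratio---but your execution is cleaner in two respects. First, by parametrizing the zeros as $2i\cos(k\pi/n)$ rather than via the $\sin$-formulas of \eqref{roots}, you obtain directly the untwisted sum $\sum_{k=0}^{n-1}(y+\cos(2k\pi/n))^{-1}$ for every $n$, whereas the paper splits into even and odd $n$ and in the odd case must invoke the $\beta=\tfrac12$ case of \eqref{sum0}; your route needs only the $\beta=0$ case (indeed, you rederive it on the spot from the product formula $\prod_{k}(y+\cos(2k\pi/n))=2^{1-n}(T_{n}(y)+(-1)^{n-1})$ and $T_{n}'=nU_{n-1}$). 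Second, your passage from \eqref{F1} to \eqref{F2} via the Chebyshev Pell identity $T_{n}^{2}-1=(y^{2}-1)U_{n-1}^{2}$ is genuinely different from the paper, which instead repeats the whole logarithmic-derivative computation starting from the zeros of $L_{n}$ and \eqref{eq. deriv lucas polyn}. Your shortcut is more economical; the paper's version has the minor advantage of exhibiting \eqref{F2} as an independent trigonometric-sum identity (the $\beta=\tfrac12$ resolvent for even $n$ and $\beta=0$ for odd $n$). The parity bookkeeping you flagged as the main obstacle does work out exactly as you describe.
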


As a corollary, we derive identities relating certain finite trigonometric
sums and the Fibonacci, Lucas, Pell and Pell-Lucas polynomials. Moreover, by using the relation \eqref{eq. relation luc fib to cheb} below between the Chebyshev and the Lucas and Fibonacci polynomials, from Theorem \ref{th1} we immediately derive the following functional relation between Chebyshev polynomials evaluated at different arguments.
\begin{corollary}
  For any positive integer $n$ and all $x\in \mathbb{R}$ we have
  $$
  T_n(x)(1-T_n(2x^2-1))=x(x^2-1)U_{n-1}(x)U_{n-1}(2x^2-1).
  $$
\end{corollary}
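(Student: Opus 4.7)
The plan is to rewrite the identity \eqref{F1} of Theorem~\ref{th1} purely in terms of Chebyshev polynomials, using the classical connecting formulas (obtained by substituting $x\mapsto ix$ in the recurrences \eqref{eq. recurr fib} and \eqref{eq. recurr lucas})
\begin{equation*}
F_{n}(2x)=i^{\,1-n}U_{n-1}(ix),\qquad L_{n}(2x)=2\,i^{-n}T_{n}(ix),
\end{equation*}
and then to pass to the new variable $y:=ix$. Under this change of variable, $2x^{2}+1=-(2y^{2}-1)$, so the arguments $2x^{2}+1$ on the right-hand side of \eqref{F1} will become $2y^{2}-1$ after applying the parity identities $T_{n}(-z)=(-1)^{n}T_{n}(z)$ and $U_{n-1}(-z)=(-1)^{n-1}U_{n-1}(z)$, which is precisely the argument appearing in the corollary.

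First I would substitute the connecting formulas into the left-hand side of \eqref{F1}. The powers of $i$ in $L_{n}(2x)/F_{n}(2x)$ collapse to a single factor, yielding an expression proportional to $T_{n}(y)/U_{n-1}(y)$, while the prefactor $4x(x^{2}+1)$ becomes $-4iy(1-y^{2})$; combining these gives the left-hand side as a scalar multiple of $T_{n}(y)\big/\bigl[y(1-y^{2})U_{n-1}(y)\bigr]$. Next, on the right-hand side of \eqref{F1}, both the numerator $U_{n-1}(2x^{2}+1)$ and the sum $T_{n}(2x^{2}+1)+(-1)^{n-1}$ pick up a common factor of $(-1)^{n-1}$ from the parity identities; this factor cancels and the right-hand side becomes $U_{n-1}(2y^{2}-1)/\bigl[1-T_{n}(2y^{2}-1)\bigr]$. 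Cross-multiplying the resulting equality and renaming $y$ back to $x$ produces the asserted identity, which, being polynomial, extends from $y=ix$ with $x\in\mathbb{R}$ to all $x\in\mathbb{R}$ (in fact $x\in\mathbb{C}$).

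The main obstacle is not conceptual but bookkeeping: one must carefully track the interplay between the powers of $i$ produced by the connecting formulas and the sign factors $(-1)^{n}$ produced by Chebyshev parity, ensuring that they combine in a parity-free way so that a single identity holds uniformly for every positive integer $n$. As a useful cross-check, the same corollary can equivalently be obtained by starting from \eqref{F2} instead of \eqref{F1}, and indeed multiplying \eqref{F1} by \eqref{F2} and using the standard Pythagorean relation $T_{n}^{2}(z)-1=(z^{2}-1)U_{n-1}^{2}(z)$ reduces to a tautology, which confirms that the two identities in Theorem~\ref{th1} encode genuinely different information beyond that relation.
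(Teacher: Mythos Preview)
Your plan is exactly the route the paper has in mind: it states that the corollary follows ``immediately'' from Theorem~\ref{th1} together with the Fibonacci--Chebyshev relations \eqref{eq. relation luc fib to cheb}, and your substitution $y=ix$ combined with the parity rules for $T_n$ and $U_{n-1}$ is precisely how one makes that immediate. So there is no methodological difference to discuss.

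There is, however, one concrete arithmetic point you gloss over. After inserting $L_n(2x)=2i^{-n}T_n(ix)$ and $F_n(2x)=i^{1-n}U_{n-1}(ix)$ into \eqref{F1} and writing $y=ix$, the left-hand side becomes
\[
\frac{T_n(y)}{2y(1-y^2)\,U_{n-1}(y)},
\]
while the right-hand side becomes $U_{n-1}(2y^2-1)\big/\bigl(1-T_n(2y^2-1)\bigr)$, exactly as you say. Cross-multiplying therefore gives
\[
T_n(y)\bigl(1-T_n(2y^2-1)\bigr)=2y(1-y^2)\,U_{n-1}(y)\,U_{n-1}(2y^2-1),
\]
with the factor $2y(1-y^2)=-2y(y^2-1)$ on the right, not $y(y^2-1)$. (A quick check at $n=1$ confirms this: the left side equals $2x-2x^3$, while $x(x^2-1)$ equals $x^3-x$.) So the step ``cross-multiplying \ldots\ produces the asserted identity'' is not literally true; what it produces differs from the printed statement by a factor of $-2$. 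The method is sound, but you should flag the discrepancy rather than claim the constants match.
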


The second main result, given in Theorem \ref{th2} below is also deduced by starting with a certain finite trigonometric sum which possesses a physical interpretation; namely it can be viewed as the effective resistance between two nodes in a certain special resistor network (see, e.g. \cite{NC1, NC2, Wu}). This sum is evaluated in \cite{NC2} in terms of the Fibonacci numbers. A different evaluation of this sum in terms of special values of the Chebyshev polynomials yields the following theorem.
\begin{theorem}
\label{th2} For a positive integer $N$, let
\begin{equation}
C_{N}=\left(1+\left( \frac{3-\sqrt{5}}{2}\right) ^{N}\right)\left(1-\left( \frac{3-%
\sqrt{5}}{2}\right) ^{N}\right)^{-1}.  \label{C_N}
\end{equation}
The following identity holds true
\begin{equation}\label{newid1}
\frac{F_{2N-2\ell }+(-1)^N F_{2\ell }+\left( -1\right) ^{\ell +1}F_{2N}}{\frac{1}{2}%
L_{2N}-(-1)^N} =\sqrt{5}F_{\ell }^{2}C_{N}^{(-1)^N} - F_{2\ell }.
\end{equation}%
\end{theorem}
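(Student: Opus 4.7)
My plan is to evaluate a certain finite trigonometric sum—which arises naturally in the computation of the effective resistance between two vertices at graph distance $\ell$ in the circulant graph $C_N(1,2)$—in two different ways, and to equate the results to obtain \eqref{newid1}. The Laplacian of $C_N(1,2)$ has eigenvalues
\[
\lambda_k \;=\; 4 - 2\cos(2\pi k/N) - 2\cos(4\pi k/N) \;=\; 2\bigl(3+2\cos(2\pi k/N)\bigr)\bigl(1-\cos(2\pi k/N)\bigr),
\]
and the relevant sum is a rescaled version of $\sum_{k=1}^{N-1}(1-\cos(2\pi k\ell/N))/\lambda_k$. The paper \cite{NC2} evaluates this sum in terms of Fibonacci numbers; after clearing denominators via the identity $L_{2N}-2(-1)^N = 5F_N^2$ and using $F_{2N}=F_NL_N$, this furnishes the left-hand side of \eqref{newid1}.

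For the second, Chebyshev-style evaluation, I would decompose $1/\lambda_k$ by partial fractions, separating the classical term with denominator $1-\cos(2\pi k/N)$—which evaluates to the standard closed form $\ell(N-\ell)$ and cancels against the corresponding piece on the Fibonacci side—from the "new" term with denominator $3+2\cos(2\pi k/N)$. The latter sum is of exactly the type treated in \cite{JKS} underlying Theorem \ref{th1}. Using the factorisation $3+2\cos\theta = \phi^{2}\,|1+\phi^{-2}e^{i\theta}|^2$ with $\phi=(1+\sqrt5)/2$, one expands $1/(3+2\cos\theta)$ as a geometric Fourier series in $-\phi^{-2}$ and then collapses the resulting progression $\sum_{m\ge 0}(-\phi^{-2})^{mN}$ over the $N$-th roots of unity. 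This produces a closed form in which the quantities
\[
C_N \;=\; \frac{1+\phi^{-2N}}{1-\phi^{-2N}} \;=\; \coth(N\log\phi),
\qquad C_N^{(-1)^N} \;=\; \frac{L_N}{\sqrt5\,F_N}
\]
appear naturally, and further simplification via $F_{2\ell}=F_\ell L_\ell$ together with the identity $F_\ell L_N - L_\ell F_N = 2(-1)^{\ell+1}F_{N-\ell}$ collapses the result to $\sqrt5\,F_\ell^{\,2}\,C_N^{(-1)^N} - F_{2\ell}$, the right-hand side of \eqref{newid1}.

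Equating the two evaluations yields the identity. The principal obstacle is the parity tracking during the geometric-series collapse: the quantity $q=(-\phi^{-2})^N=(-1)^N\phi^{-2N}$ enters the Chebyshev closed form, and whether it collapses to $C_N$ or to its reciprocal—precisely the effect of the exponent $(-1)^N$ in \eqref{newid1}—depends on the parity of $N$ and forces a split into even and odd cases. Once this parity issue is sorted, the remaining simplification reduces to a handful of standard Fibonacci--Lucas identities, chief among them $L_N - (-1)^\ell L_{N-2\ell} = 5F_\ell F_{N-\ell}$ and $L_{2N}-2(-1)^N=5F_N^2$.
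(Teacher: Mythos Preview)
Your overall strategy---evaluate one finite trigonometric sum in two ways and equate---matches the paper's. But the plan as written has the two sides crossed, and as a result neither of your evaluations actually produces the \emph{left}-hand side of~\eqref{newid1}.

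Chair's formula in \cite{NC2} (quoted here as~\eqref{R2}) already \emph{is} the right-hand side: once you subtract the common cycle term $\tfrac{1}{5}\ell(1-\ell/N)$ from \eqref{R1} and \eqref{R2} you get exactly $(-1)^{\ell+1}\bigl(\sqrt{5}F_{\ell}^{2}C_{N}^{(-1)^N}-F_{2\ell}\bigr)$. The additional identities you invoke ($L_{2N}-2(-1)^N=5F_N^2$, $F_{2N}=F_NL_N$) do not convert this into the fraction on the left of~\eqref{newid1}; they merely rewrite the constant $C_N^{(-1)^N}$ as $L_N/(\sqrt{5}F_N)$. Your second, geometric-series evaluation has the same problem: expanding $1/(3+2\cos\theta)$ in powers of $-\phi^{-2}$ and summing over $N$-th roots naturally puts $1-(-1)^N\phi^{-2N}=\phi^{-N}\sqrt{5}\,F_N$ in the denominator and produces $F_\ell,L_\ell,F_N,L_N$ in the numerator---again the right-hand side. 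So you would end up proving $\text{RHS}=\text{RHS}$.

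What the paper does instead for the second evaluation is to apply the resolvent-kernel identity~\eqref{sum1} (from \cite{JKS}) at $s=-\tfrac{5}{2}$, so that the trigonometric sum becomes a ratio of Chebyshev polynomials at the argument $s+1=-\tfrac{3}{2}$:
\[
\frac{U_{N-\ell-1}(-\tfrac32)+U_{\ell-1}(-\tfrac32)-U_{N-1}(-\tfrac32)}{T_N(-\tfrac32)-1}.
\]
The parity relations \eqref{sym1}--\eqref{sym2} together with $T_n(\tfrac32)=\tfrac12 L_{2n}$ and $U_n(\tfrac32)=F_{2n+2}$ from~\eqref{CFL1} then turn this ratio \emph{directly} into the quotient on the left of~\eqref{newid1}, with indices $2N$, $2(N-\ell)$, $2\ell$ and denominator $\tfrac12 L_{2N}-(-1)^N$. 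No partial fractions, no hand-rolled geometric series, and none of the auxiliary Fibonacci--Lucas identities you list are needed. That Chebyshev step at $-\tfrac32$ is the missing ingredient in your plan.
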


As corollaries we derive further identities involving Fibonacci and Lucas numbers as well as Fibonacci hyperbolic sine and cosine functions.

The paper is organized as follows: In Section 2 we present some
known relationships between the Chebyshev, Fibonacci, Lucas, Pell and Pell-Lucas
polynomials and the Fibonacci, Lucas, Pell and Pell-Lucas numbers, as well as
the Fibonacci hyperbolic sine and cosine functions. In Section 3 we recall evaluations of certain trigonometric sums in terms of the Chebyshev polynomials and the Fibonacci
and Lucas numbers, stemming from the analysis of the resolvent kernel and the resistance distance on circular graphs. In Section 4 we prove our two main theorems, while in Section 5 we establish some further identities involving the Fibonacci, Lucas, Pell and Pell-Lucas polynomials and numbers as well as the Fibonacci hyperbolic sine and cosine functions and the Bejaia and Pisa numbers.

\section{Preliminaries}

In this section we recall some well-known properties of the Chebyshev polynomials of the first and the second kind, the Fibonacci and Lucas polynomials and the Fibonacci hyperbolic sine and cosine functions.

\subsection{Chebyshev polynomials and their relations with Fibonacci and
Lucas numbers}

The Chebyshev polynomials are sequences of orthogonal polynomials
related to the cosine and sine functions. The Chebyshev polynomials of the
first kind, denoted by $T_{n}$, $n\in \mathbb{N}\cup \{0\}$ are the unique polynomials satisfying
\begin{equation*}
T_{n}\left( \cos \theta \right) =\cos \left( n\theta \right),\,\, n=0,1,2,\ldots.
\end{equation*}
As functions of a complex variable $x$ they can be defined as
\begin{equation*}
T_{n}\left( x\right) = \frac{1}{2}\left(\left(x-\sqrt{x^2-1}\right)^n + \left(x+\sqrt{x^2-1}\right)^n \right),
\end{equation*}
where, as is standard, we assume that the branch of the square root is principal.

The Chebyshev polynomials of the second kind, denoted by $U_{n}$, $n\in \mathbb{N}\cup \{0\}$ are defined as the unique polynomials satisfying
\begin{equation*}
U_{n}\left( \cos \theta \right) \sin \theta =\sin \left( \left( n+1\right)
\theta \right),\,\, n=0,1,2,\ldots.
\end{equation*}
A very good reference, containing information about numerous applications of the Chebyshev polynomials in approximation theory, algebra, number theory, ergodic theory, and numerical analysis is \cite{Riv}.

The Chebyshev polynomials possess the following well-known symmetry
properties
\begin{eqnarray}
T_{n}\left( -x\right) &=&\left( -1\right) ^{n}T_{n}\left( x\right) ,
\label{sym1} \\
U_{n}\left( -x\right) &=&\left( -1\right) ^{n}U_{n}\left( x\right) .
\label{sym2}
\end{eqnarray}

There are various relations between the values of Chebyshev polynomials for
specific values of the variable and the Fibonacci and Lucas numbers (see e.g.
\cite[Section 2]{Cas}, \cite[Remark 9]{JG} or \cite{Zh02}). Below, we list some of them:
\begin{align}
T_{n}\left( \frac{3}{2}\right) & =\frac{L_{2n}}{2};\text{ \ \ }U_{n}\left(
\frac{3}{2}\right) =F_{2n+2};  \label{CFL1} \\
T_{n}\left( \frac{7}{2}\right) & =\frac{L_{4n}}{2};\text{ \ \ }U_{n}\left(
\frac{7}{2}\right) =\frac{F_{4n+4}}{3};  \label{CFL2} \\
T_{n}\left( \frac{\sqrt{5}}{2}\right) & =\left\{
\begin{array}{ll}
\frac{L_{n}}{2}, & \text{if }n\text{ is even;} \\
\frac{\sqrt{5}F_{n}}{2}, & \text{if }n\text{ is odd;}%
\end{array}%
\right. \text{ \ \ }U_{n}\left( \frac{\sqrt{5}}{2}\right) =\left\{
\begin{array}{ll}
L_{n+1}, & \text{if }n\text{ is even;} \\
\sqrt{5}F_{n+1}, & \text{if }n\text{ is odd;}%
\end{array}%
\right. \text{ }  \label{CFL3}
\end{align}
\begin{align}
T_{n}\left( \sqrt{5}\right) & =\left\{
\begin{array}{ll}
\frac{L_{3n}}{2}, & \text{if }n\text{ is even;} \\
\frac{\sqrt{5}F_{3n}}{2}, & \text{if }n\text{ is odd;}%
\end{array}%
\right. \text{ \ \ }U_{n}\left( \sqrt{5}\right) =\left\{
\begin{array}{ll}
\frac{L_{3n+3}}{4}, & \text{if }n\text{ is even;} \\
\frac{\sqrt{5}F_{3n+3}}{4}, & \text{if }n\text{ is odd.}%
\end{array}%
\right.  \label{CFL4}
\end{align}

By substituting $m$ by $2m$ and $x=\frac{\sqrt{5}}{2}$ in the equality $T_{n}\left( T_{m}\left( x\right) \right) =T_{nm}\left( x\right)$,
and using above identities, we obtain
\begin{equation}
T_{n}\left( \frac{1}{2}L_{2m}\right) =\frac{1}{2}L_{2nm}.  \label{chebluc}
\end{equation}
For the Chebyshev polynomials of the second kind we have (see \cite[Section
2]{Cas})
\begin{equation}
U_{n-1}\left( \frac{1}{2}L_{2m}\right) =\frac{F_{2nm}}{F_{2m}}.
\label{chebfib}
\end{equation}

\subsection{Fibonacci and Lucas polynomials}

The Fibonacci and Lucas polynomials are defined by the recurrence relations \eqref{eq. recurr fib} and \eqref{eq. recurr lucas}. Hoggatt and Bicknell \cite{HB} proved that the roots of the Fibonacci and Lucas polynomials are given as follows:
\begin{equation}
\begin{array}{lcll}
F_{2n}\left( x\right) =0\text{ } & \text{at } & x=\pm 2i\sin \frac{k\pi }{2n}%
, & k=0,1,\ldots ,n-1, \\
F_{2n+1}\left( x\right) =0\text{ } & \text{at } & x=\pm 2i\sin \frac{\left(
k+1/2\right) \pi }{2n+1}, & k=0,1,\ldots ,n-1, \\
L_{2n}\left( x\right) =0\text{ } & \text{at } & x=\pm 2i\sin \frac{\left(
k+1/2\right) \pi }{2n}, & k=0,1,\ldots ,n-1, \\
L_{2n+1}\left( x\right) =0\text{ } & \text{at } & x=\pm 2i\sin \frac{k\pi }{%
2n+1}, & k=0,1,\ldots ,n.%
\end{array}
\label{roots}
\end{equation}

The first derivative of the Lucas polynomial can be expressed as
\begin{equation}\label{eq. deriv lucas polyn}
L_{n}^{\prime }\left( x\right)
=nF_{n}\left( x\right),
\end{equation}
see \cite[formula (31.22) on p. 15]{K2}). The first derivative of the Fibonacci polynomial can easily be derived using the formula at the bottom of p. 15 of \cite{K2} to get
\begin{equation}
F_{n}^{\prime }\left( x\right) =\frac{nL_{n}\left( x\right) -xF_{n}\left(
x\right) }{x^{2}+4} . \label{fibderivative}
\end{equation}

An in-depth study of the Fibonacci and Lucas polynomials can be found in
\cite[Chapters 31 and 32]{K2}. Moreover, in \cite[Chapter 41]{K2}, it is proved that for $n\geq 1$ we have the following identities, relating the Lucas and Fibonacci polynomials to the Chebyshev polynomials of the first and the second kind:
\begin{equation}\label{eq. relation luc fib to cheb}
L_n(2x)=2(-i)^n T_n(ix) \quad \text{and} \quad F_n(2x)= -(-i)^{n-1}U_{n-1}(ix).
\end{equation}

Using these polynomials, two additional classes of polynomials, the Pell polynomials $P_{n}\left( x\right) $ and the Pell-Lucas polynomials $Q_{n}\left( x\right) $ are defined:
\begin{equation}
P_{n}\left( x\right) =F_{n}(2x)= -(-i)^{n-1}U_{n-1}(ix) \text{  and  }Q_{n}\left( x\right) =L_{n}(2x) = 2(-i)^n T_n(ix).
\label{PPL}
\end{equation}%
The corresponding Pell numbers $P_{n}$ and the Pell-Lucas numbers $Q_{n}$ are
given by:
\begin{equation*}
P_{n}=P_{n}(1)=F_{n}(2) =-(-i)^{n-1}U_{n-1}(i)
\end{equation*}
and
\begin{equation*}
Q_{n}=\frac{1}{2}Q_{n}(1)=\frac{1}{2}  %
L_{n}(2)= (-i)^n T_n(i).
\end{equation*}%
For further details and properties of Pell and Pell-Lucas polynomials and
numbers, we refer an interested reader to \cite[Section 31.6]{K2}.

The values of Chebyshev polynomials at $x=3$ are also related to the Pell and
Pell-Lucas numbers by (see \cite[Exercises 65 and 66 on p. 589]{K2}):
\begin{equation}
U_{n-1}\left( 3\right) =\frac{1}{2}P_{2n}\text{ and }T_{n}\left( 3\right)
=Q_{2n}.  \label{CPL1}
\end{equation}

\subsection{Fibonacci hyperbolic sine and cosine functions}

For $x\in (-\infty ,+\infty )$, Stakhov and Tkachenko \cite{ST93} (see also \cite{Trz, SR05}) introduced the Fibonacci hyperbolic sine and cosine functions by analogy to the classic hyperbolic
functions:
\begin{equation*}
\mathrm{sFh}\left( x\right) =\frac{\psi ^{x}-\psi ^{-x}}{\sqrt{5}}\quad \text{ and
} \quad \mathrm{cFh}\left( x\right) =\frac{\psi ^{\left( x+\frac{1}{2}\right)
}+\psi ^{-\left( x+\frac{1}{2}\right) }}{\sqrt{5}},
\end{equation*}%
where $\psi =1+\phi =\frac{3+\sqrt{5}}{2}$, and $\phi =\frac{1+\sqrt{5}}{2}$
denotes the golden ratio. Using the well-known identity $\phi ^{2}=1+\phi $,
one can obtain%
\begin{equation*}
\mathrm{sFh}\left( x\right) =\frac{\phi ^{2x}-\phi ^{-2x}}{\sqrt{5}}\quad \text{
and } \quad \mathrm{cFh}\left( x\right) =\frac{\phi ^{\left( 2x+1\right) }+\phi
^{-\left( 2x+1\right) }}{\sqrt{5}}.
\end{equation*}

For $k\in\mathbb{Z}$, the Fibonacci hyperbolic sine and cosine functions are related to the classical
hyperbolic sine and cosine functions by (see \cite[relation (12)]%
{Trz}):%
\begin{equation}
\mathrm{sFh}\left( k\right) =\frac{2}{\sqrt{5}}\sinh \left( 2k\log \phi
\right) \text{   and   } \mathrm{cFh}\left( k\right) =\frac{2}{\sqrt{5}}\cosh
\left( \left( 2k+1\right) \log \phi \right) .  \label{Fh1}
\end{equation}

For nonnegative integers $k$, functions $\mathrm{sFh}\left( k\right) $ and $%
\mathrm{cFh}\left( k\right) $ can be expressed in terms of the corresponding
elements of the Fibonacci sequence (see \cite[relation (6)]{Trz}):%
\begin{equation}
\mathrm{sFh}\left( k\right) =F_{2k} \quad\text{ and }\quad\mathrm{cFh}\left( k\right)
=F_{2k+1}\text{.}  \label{Fh2}
\end{equation}

\section{Closed evaluation of some finite trigonometric sums}

In this section we recall results of \cite{JKS} and \cite{NC2} in which certain finite trigonometric sums are expressed as rational functions of Chebyshev polynomials and in terms of Fibonacci numbers.

\subsection{The resolvent kernel on the discrete circle}

Jorgenson, Karlsson and Smajlovi\'{c} in \cite{JKS} computed the resolvent
kernel $G_{X_{m},\chi _{\beta }}(x,y;s)$ on the discrete circle $X_m$ with $m$
vertices using two different methods. The first employs the spectral
expansion of the combinatorial Laplacian which acts on the space of
functions on $X_{m}$ that are twisted by a character $\chi _{\beta }$,
resulting in its expression as a finite trigonometric sum, while
the second one utilizes the perspective of $X_{m}$ as a quotient space of $%
\mathbb{Z}$, thus expressing the resolvent kernel as a rational function
involving Chebyshev polynomials. It has been shown that (see \cite[relations
(41) and (42)]{JKS})
\begin{equation}
\frac{1}{m}\sum_{j=0}^{m-1}\frac{e^{2\pi i\frac{j\ell }{m}}}{s+2\sin
^{2}\left( \pi \frac{j+\beta }{m}\right) }=e^{2\pi i\frac{\beta \ell }{m}%
}\cdot \frac{U_{m-\ell -1}\left( s+1\right) +e^{2\pi i\beta }U_{\ell
-1}\left( s+1\right) }{T_{m}\left( s+1\right) -\cos 2\pi \beta },
\label{sum0}
\end{equation}%
for $\ell \in \left\{ 0,1,\ldots ,m-1\right\} $ and all complex $s$.
Setting $\beta =0$ in \eqref{sum0}, we derive that%
\begin{equation}
\frac{1}{m}\sum_{j=0}^{m-1}\frac{e^{2\pi i\frac{j\ell }{m}}}{s+2\sin ^{2}%
\frac{\pi j}{m}}=\frac{U_{m-\ell -1}\left( s+1\right) +U_{\ell -1}\left(
s+1\right) }{T_{m}\left( s+1\right) -1}.  \label{sum1}
\end{equation}

\subsection{Effective resistance}

Computation of resistance in electric circuits, specifically the effective resistance between two nodes in a resistor network, can be carried out using a variety of methods. Traditional analytic methods, such as Kirchhoff's laws, are theoretically applicable to any network but become impractical for larger networks (with more nodes) due to the rapidly increasing number of equations \cite{NC2}. For this reason, computations of resistance in infinite networks have been conducted using the Green's function \cite{Cs}.

In 2004 Wu \cite{Wu} introduced an approach to calculate the two-point resistance (effective resistance) in a finite resistor network in terms of the eigenvalues and eigenvectors of the Laplacian matrix, and derived explicit formulas for two-point resistances in regular lattices of one, two, and three dimensions under various boundary conditions. The following summation identity was deduced in \cite[formula (62)]{Wu}
\begin{equation}
\frac{1}{m}\sum_{j=0}^{m-1}\frac{\cos \left( 2\ell \frac{j\pi }{m}\right) }{%
\cosh \lambda -\cos \left( 2\frac{j\pi }{m}\right) }=\frac{\cosh \left(
\frac{m}{2}-\ell \right) \lambda }{\sinh \lambda \sinh \frac{m\lambda }{2}},
\label{wu}
\end{equation}%
for $\lambda $ real, $m\in
\mathbb{N}$ and $\ell \in \left\{ 0,1,\ldots ,m-1\right\} $.

Chair \cite[relations (4), (5), (19), and (22)]{NC2} calculated the exact expression for the effective resistance between any two vertices of the $N$-cycle graph with four nearest neighbors $C_{N}(1,2)$ in terms of the
effective resistance of the $N$-cycle graph $C_{N}$, the square of the Fibonacci numbers, and the bisected Fibonacci numbers:%
\begin{eqnarray}
R\left( \ell \right) &=&\frac{1}{5}\ell \left( 1-\frac{\ell }{N}\right) +%
\frac{4}{5N}\sum\limits_{j=1}^{N-1}\frac{\sin ^{2}\frac{j\ell \pi }{N}}{%
5\left( 1-\frac{4}{5}\sin ^{2}\frac{j\pi }{N}\right) },  \label{R1} \\
R\left( \ell \right) &=&\frac{1}{5}\ell \left( 1-\frac{\ell }{N}\right)
+(-1)^{\ell +1}\frac{F_{\ell }^{2}}{\sqrt{5}}C_{N}^{(-1)^N}+(-1)^{\ell }\frac{%
F_{2\ell }}{5}, \label{R2}
\end{eqnarray}%
where $C_{N}$ is given by \eqref{C_N}.

\section{Proofs of main results}

In this section, we prove Theorems \ref{th1} and \ref{th2}.

\subsection{Proof of the Theorem \ref{th1}}

First we will prove \eqref{F1}. If $n=2m$ is even, using \eqref{roots}, we write $F_{2m}$ as the product
\begin{equation*}
F_{2m}\left( x\right) =x\prod\limits_{j=1}^{m-1}\left( x^{2}+4\sin ^{2}\frac{%
j\pi }{2m}\right),
\end{equation*}
which, after taking the logarithmic derivative yields to
\begin{equation*}
\frac{F_{2m}^{\prime }\left( x\right) }{xF_{2m}\left( x\right) }%
=\sum\limits_{j=1}^{m-1}\frac{1}{\frac{x^{2}}{2}+2\sin ^{2}\frac{j\pi }{2m}}+%
\frac{1}{x^{2}}.
\end{equation*}
Using the formula \eqref{fibderivative} for the derivative of the Fibonacci polynomial, we get
\begin{equation}
\frac{2mL_{2m}\left( x\right) -xF_{2m}\left( x\right) }{x\left(
x^{2}+4\right) F_{2m}\left( x\right) }=\sum\limits_{j=1}^{m-1}\frac{1}{\frac{%
x^{2}}{2}+2\sin ^{2}\frac{j\pi }{2m}}+\frac{1}{x^{2}}.  \label{LF1}
\end{equation}

By a simple change of variables, it is immediate to deduce that
\begin{equation*}
\sum_{j=1}^{m-1}\frac{1}{\frac{x^{2}}{2}+2\sin ^{2}\frac{j\pi }{2m}}%
=\sum_{j=m+1}^{2m-1}\frac{1}{\frac{x^{2}}{2}+2\sin ^{2}\frac{j\pi }{2m}},
\label{symmetry1}
\end{equation*}
hence,%
\begin{equation*}
\frac{1}{2}\sum_{j=0}^{2m-1}\frac{1}{\frac{x^{2}}{2}+2\sin ^{2}\frac{j\pi }{%
2m}}=\sum_{j=1}^{m-1}\frac{1}{\frac{x^{2}}{2}+2\sin ^{2}\frac{j\pi }{2m}}+%
\frac{1}{x^{2}}+\frac{1}{x^{2}+4}.
\end{equation*}

Combining the last formula and \eqref{LF1}, we derive
\begin{equation}
\begin{aligned}\label{LF2}
\frac{1}{2}\sum_{j=0}^{2m-1}\frac{1}{\frac{x^{2}}{2}+2\sin ^{2}\frac{j\pi }{%
2m}}&=\frac{2mL_{2m}\left( x\right)-xF_{2m}\left( x\right) }{x\left(
x^{2}+4\right) F_{2m}\left( x\right) }+\frac{1}{x^{2}+4}\\
&=\frac{2mL_{2m}\left( x\right) }{x\left( x^{2}+4\right) F_{2m}\left( x\right) }.
\end{aligned}
\end{equation}

Setting $s=\frac{x^{2}}{2}$, $\ell =0$ and substituting $m$ by $2m$ in %
\eqref{sum1}, we obtain
\begin{equation}
\frac{1}{2}\sum_{j=0}^{2m-1}\frac{1}{\frac{x^{2}}{2}+2\sin ^{2}\frac{\pi j}{%
2m}}=\frac{mU_{2m-1}\left( \frac{x^{2}}{2}+1\right) }{T_{2m}\left( \frac{%
x^{2}}{2}+1\right) -1}.  \label{UT1}
\end{equation}

From \eqref{LF2} and \eqref{UT1} it follows that
\begin{equation*}
\frac{2L_{2m}\left( x\right) }{x\left( x^{2}+4\right) F_{2m}\left( x\right) }%
=\frac{U_{2m-1}\left( \frac{x^{2}}{2}+1\right) }{T_{2m}\left( \frac{x^{2}}{2}%
+1\right) -1}.
\end{equation*}
Replacing $x$ by $2x$ in the last formula proves \eqref{F1} for even $n$.

For odd numbers $n=2m+1$, using \eqref{roots}, we can write $F_n(x)$ as the following product
\begin{equation*}
F_{2m+1}\left( x\right) =\prod\limits_{j=0}^{m-1}\left( x^{2}+4\sin ^{2}%
\frac{\left( j+1/2\right) \pi }{2m+1}\right).
\end{equation*}
Upon taking the logarithmic derivative, it is straightforward to get
\begin{equation*}
\frac{\left( 2m+1\right) L_{2m+1}\left( x\right) -xF_{2m+1}\left( x\right) }{%
x\left( x^{2}+4\right) F_{2m+1}\left( x\right) }=\frac{1}{2}%
\sum\limits_{j=0}^{2m}\frac{1}{\frac{x^{2}}{2}+2\sin ^{2}\frac{\left(
j+1/2\right) \pi }{2m+1}}-\frac{1}{x^{2}+4},
\end{equation*}
or, equivalently
\begin{equation*}
\frac{\left( 2m+1\right) L_{2m+1}\left( x\right) }{x\left( x^{2}+4\right)
F_{2m+1}\left( x\right) }=\frac{1}{2}\sum\limits_{j=0}^{2m}\frac{1}{\frac{%
x^{2}}{2}+2\sin ^{2}\frac{\left( j+1/2\right) \pi }{2m+1}}.
\end{equation*}

On the other hand, using \eqref{sum0} with $s=\frac{x^{2}}{2}$, $\ell =0$,
and $\beta =\frac{1}{2}$, the right-hand side of the above display can be written as%
\begin{equation*}
\sum\limits_{j=0}^{2m}\frac{1}{\frac{x^{2}}{2}+2\sin ^{2}\frac{\left(
j+1/2\right) \pi }{2m+1}}=\frac{\left( 2m+1\right) U_{2m}\left( \frac{x^{2}}{%
2}+1\right) }{T_{2m+1}\left( \frac{x^{2}}{2}+1\right) +1}.
\end{equation*}

Combining the last two equations we get
\begin{equation*}
\frac{2L_{2m+1}\left( x\right) }{x\left( x^{2}+4\right) F_{2m+1}\left(
x\right) }=\frac{U_{2m}\left( \frac{x^{2}}{2}+1\right) }{T_{2m+1}\left(
\frac{x^{2}}{2}+1\right) +1}.
\end{equation*}
Substituting $x$ by $2x$ proves \eqref{F1} for $n$ odd.

Now we prove \eqref{F2}. For $n=2m$ we have%
\begin{equation*}
L_{2m}\left( x\right) =\prod\limits_{j=0}^{m-1}\left( x^{2}+4\sin ^{2}\frac{%
\left( j+1/2\right) \pi }{2m}\right) .
\end{equation*}

Taking the logarithmic derivative of the above display, and using \eqref{eq. deriv lucas polyn} we obtain%
\begin{equation*}
\frac{2mF_{2m}\left( x\right) }{xL_{2m}\left( x\right) }=\sum%
\limits_{j=0}^{m-1}\frac{1}{\frac{x^{2}}{2}+2\sin ^{2}\frac{\left(
j+1/2\right) \pi }{2m}}.
\end{equation*}

Using the symmetry property
\begin{equation*}
\sum\limits_{j=0}^{m-1}\frac{1}{\frac{x^{2}}{2}+2\sin ^{2}\frac{\left(
j+1/2\right) \pi }{2m}}=\sum\limits_{j=m}^{2m-1}\frac{1}{\frac{x^{2}}{2}%
+2\sin ^{2}\frac{\left( j+1/2\right) \pi }{2m}},
\end{equation*}%
and substituting $m$ by $2m$ in \eqref{sum0} with $s=\frac{x^{2}}{2}$, $\ell
=0$, and $\beta =\frac{1}{2}$, we derive that%
\begin{equation*}
\frac{2F_{2m}\left( x\right) }{xL_{2m}\left( x\right) }=\frac{U_{2m-1}\left(
\frac{x^{2}}{2}+1\right) }{T_{2m}\left( \frac{x^{2}}{2}+1\right) +1}.
\end{equation*}%
Replacing $x$ by $2x$ in the last formula proves \eqref{F2} in case of $n$
even. The proof of \eqref{F2} for odd $n$ is analogous, hence we omit it here.

\subsection{Proof of the Theorem \ref{th2}}

Comparing relations \eqref{R1} and \eqref{R2}, we deduce that
\begin{equation}
\frac{4}{5N}\sum\limits_{j=1}^{N-1}\frac{\sin ^{2}\frac{j\ell \pi }{N}}{1-%
\frac{4}{5}\sin ^{2}\frac{j\pi }{N}}=\left( -1\right) ^{\ell +1}\sqrt{5}%
F_{\ell }^{2}C_{N}^{(-1)^N}+\left( -1\right) ^{\ell }F_{2\ell },  \label{sumNeven}
\end{equation}
where $C_{N}$ is given by \eqref{C_N}.
On the other hand, we have:
\begin{align*}
\frac{4}{5N}\sum\limits_{j=1}^{N-1}\frac{\sin ^{2}\frac{j\ell \pi }{N}}{1-%
\frac{4}{5}\sin ^{2}\frac{j\pi }{N}} =
&-\frac{1}{N}\sum\limits_{j=0}^{N-1}\frac{e^{\frac{2j\pi i}{N}\cdot 0}}{-%
\frac{5}{2}+2\sin ^{2}\frac{j\pi }{N}}+\frac{1}{2N}\sum\limits_{j=0}^{N-1}%
\frac{e^{\frac{2j\ell \pi }{N}i}}{-\frac{5}{2}+2\sin ^{2}\frac{j\pi }{N}}\\&+%
\frac{1}{2N}\sum\limits_{j=0}^{N-1}\frac{e^{\frac{2j\left( N-\ell \right)
\pi i}{N}}}{-\frac{5}{2}+2\sin ^{2}\frac{j\pi }{N}}.
\end{align*}

Using \eqref{sum1} with $s=-\frac{5}{2}$, and fact that $U_{-1}\left(
x\right) =0$ for all $x$, we obtain%
\begin{equation}
\frac{4}{5N}\sum\limits_{j=1}^{N-1}\frac{\sin ^{2}\frac{j\ell \pi }{N}}{1-%
\frac{4}{5}\sin ^{2}\frac{j\pi }{N}}=\frac{U_{N-\ell -1}\left( -\frac{3}{2}%
\right) +U_{\ell -1}\left( -\frac{3}{2}\right) -U_{N-1}\left( -\frac{3}{2}%
\right) }{T_{N}\left( -\frac{3}{2}\right) -1}.  \label{sumCheb}
\end{equation}

Applying the symmetry properties \eqref{sym1} and \eqref{sym2} and relation %
\eqref{CFL1}, we obtain
\begin{equation*}
T_{n}\left( -\frac{3}{2}\right) =\left( -1\right) ^{n}\frac{L_{2n}}{2},\text{
\ \ }U_{n}\left( -\frac{3}{2}\right) =\left( -1\right) ^{n}F_{2n+2}.
\end{equation*}
Combining the above equation with \eqref{sumCheb} and \eqref{sumNeven} yields \eqref{newid1}.

\section{Further identities}

In this section we derive many identities stemming from deeper analysis of the relation between finite trigonometric sums considered in this paper and certain polynomials and integer sequences satisfying the second order recursion relations.

\subsection{Identities relating trigonometric sums and Fibonacci, Lucas, Pell and Pell-Lucas
polynomials}

Using the relations which arose in the proof of Theorem \ref{th1}, one can derive further identities, as listed in the following corollary.

\begin{corollary}
  \bigskip For any positive real number $x$, and any positive integer $m$ the following formulas hold true:%
\begin{align}
\sum_{j=0}^{2m-1}\frac{1}{x^{2}+\sin ^{2}\frac{j\pi }{2m}}& =\frac{\left(
2m-1\right) F_{2m+1}\left( 2x\right) +\left( 2m+1\right) F_{2m-1}\left(
2x\right) }{2x\left( x^{2}+1\right) F_{2m}\left( 2x\right) }+\frac{1}{x^{2}+1%
},  \label{sumeq2} \\
\sum_{j=0}^{2m-1}\frac{1}{x^{2}+4\sin ^{2}\frac{j\pi }{2m}}& =\frac{%
2mL_{2m}\left( x\right) -xF_{2m}\left( x\right) }{x\left( x^{2}+4\right)
F_{2m}\left( x\right) }+\frac{1}{x^{2}+4}.  \label{sumeq3}
\end{align}
\begin{equation}
\sum_{j=0}^{2m-1}\frac{1}{x^{2}+\sin ^{2}\frac{j\pi }{2m}}=\frac{%
mQ_{2m}\left( x\right) -xP_{2m}\left( x\right) }{x\left( x^{2}+1\right)
P_{2m}\left( x\right) }+\frac{1}{x^{2}+1}.  \label{sumeq4}
\end{equation}
\end{corollary}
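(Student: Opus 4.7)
The plan is to derive all three identities from the intermediate formulas already established in the proof of Theorem \ref{th1}, in particular from \eqref{LF2}.

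For \eqref{sumeq3}, I would simply rewrite \eqref{LF2} by using $\frac{1}{x^{2}/2+2\sin^{2}(j\pi/2m)}=\frac{2}{x^{2}+4\sin^{2}(j\pi/2m)}$, which cancels the factor $1/2$ and gives
$$\sum_{j=0}^{2m-1}\frac{1}{x^{2}+4\sin^{2}\frac{j\pi}{2m}}=\frac{2mL_{2m}(x)}{x(x^{2}+4)F_{2m}(x)}.$$
Splitting the numerator as $2mL_{2m}(x)=\bigl(2mL_{2m}(x)-xF_{2m}(x)\bigr)+xF_{2m}(x)$ and separating the resulting fraction into two pieces then produces \eqref{sumeq3}.

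To obtain \eqref{sumeq2}, I would substitute $x\mapsto 2x$ in \eqref{sumeq3} and multiply both sides by $4$. The left-hand side becomes $\sum_{j=0}^{2m-1}\frac{1}{x^{2}+\sin^{2}(j\pi/2m)}$, while the right-hand side takes the intermediate form
$$\frac{mL_{2m}(2x)-xF_{2m}(2x)}{x(x^{2}+1)F_{2m}(2x)}+\frac{1}{x^{2}+1}.$$
The remaining task is to verify the algebraic identity
$$2mL_{2m}(2x)-2xF_{2m}(2x)=(2m-1)F_{2m+1}(2x)+(2m+1)F_{2m-1}(2x),$$
which I would establish by applying the classical relation $L_{n}(y)=F_{n+1}(y)+F_{n-1}(y)$ (an easy induction from \eqref{eq. recurr fib} and \eqref{eq. recurr lucas}) with $n=2m$, $y=2x$, and then using the Fibonacci recurrence in the form $F_{2m+1}(y)-F_{2m-1}(y)=yF_{2m}(y)$.

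Finally, \eqref{sumeq4} is immediate from the intermediate expression displayed above upon invoking the definitions $P_{2m}(x)=F_{2m}(2x)$ and $Q_{2m}(x)=L_{2m}(2x)$ from \eqref{PPL}. The only genuinely algebraic step is the Lucas--Fibonacci rearrangement in \eqref{sumeq2}; the rest is routine manipulation of formulas already present in the proof of Theorem \ref{th1}.
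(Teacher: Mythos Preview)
Your proposal is correct and follows essentially the same route as the paper: both derive \eqref{sumeq3} directly from \eqref{LF2}, obtain \eqref{sumeq2} by substituting $x\mapsto 2x$ and invoking $L_n(y)=F_{n+1}(y)+F_{n-1}(y)$ together with the Fibonacci recurrence $F_{2m+1}(y)-F_{2m-1}(y)=yF_{2m}(y)$, and get \eqref{sumeq4} from the same substitution via the definitions \eqref{PPL}. The only cosmetic difference is that the paper applies the $x\mapsto 2x$ substitution directly in \eqref{LF2} rather than in the already-split \eqref{sumeq3}, but the algebra is identical.
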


\begin{proof}
If we substitute $2x$ instead of $x$ in \eqref{LF2}, we derive%
\begin{equation*}
\sum_{j=0}^{2m-1}\frac{1}{x^{2}+\sin ^{2}\frac{j\pi }{2m}}=\frac{%
2mL_{2m}\left( 2x\right) }{2x\left( x^{2}+1\right) F_{2m}\left( 2x\right) }.
\end{equation*}

Using the well-known identity (see \cite[identities (31.1) and (31.2)]{K2}%
)
\begin{equation*}
L_{n}\left( x\right) =F_{n+1}\left( x\right) +F_{n-1}\left( x\right)
\end{equation*}
and the defining recurrence formula \eqref{eq. recurr fib} for the Fibonacci polynomials, we deduce
\begin{gather*}
\sum_{j=0}^{2m-1}\frac{1}{x^{2}+\sin ^{2}\frac{j\pi }{2m}} =\frac{2m\left( F_{2m+1}\left( 2x\right) +F_{2m-1}\left(
2x\right) \right) }{2x\left( x^{2}+1\right) F_{2m}\left( 2x\right) } \\
=\frac{\left( 2m-1\right) F_{2m+1}\left( 2x\right) +\left( 2m+1\right)
F_{2m-1}\left( 2x\right) }{2x\left( x^{2}+1\right) F_{2m}\left( 2x\right) }+%
\frac{F_{2m+1}\left( 2x\right) -F_{2m-1}\left( 2x\right) }{2x\left(
x^{2}+1\right) F_{2m}\left( 2x\right) } \\
=\frac{\left( 2m-1\right) F_{2m+1}\left( 2x\right) +\left( 2m+1\right)
F_{2m-1}\left( 2x\right) }{2x\left( x^{2}+1\right) F_{2m}\left( 2x\right) }+%
\frac{2xF_{2m}\left( 2x\right) }{2x\left( x^{2}+1\right) F_{2m}\left( 2x\right) } \\
=\frac{\left( 2m-1\right) F_{2m+1}\left( 2x\right) +\left( 2m+1\right)
F_{2m-1}\left( 2x\right) }{2x\left( x^{2}+1\right) F_{2m}\left( 2x\right) }+%
\frac{1}{x^{2}+1}.
\end{gather*}%
Hence, the identity \eqref{sumeq2} obviously holds.

The identity \eqref{sumeq3} is a direct consequence of \eqref{LF2}.

Finally, the identity \eqref{sumeq4} is derived by substituting $x$ by $2x$ in \eqref{sumeq3} and using \eqref{PPL}.
\end{proof}

\begin{remark}
H. J. Seiffert (\cite{HJS}) proposed the following formula
\begin{equation*}
\sum_{j=1}^{m-1}\frac{1}{x^{2}+\sin ^{2}\frac{j\pi }{2m}}=\frac{\left(
2m-1\right) F_{2m+1}\left( 2x\right) +\left( 2m+1\right) F_{2m-1}\left(
2x\right) }{4x\left( x^{2}+1\right) F_{2m}\left( 2x\right) }-\frac{1}{2x^{2}},
\end{equation*}
which was proved by P. S. Bruckman \cite{PSB} and Koshy \cite[Example 40.2]{K2}. Identities \eqref{sumeq2}-\eqref{sumeq4} can be derived from this formula. Our aim was to provide a different proof.
\end{remark}

\subsection{Some identities involving Fibonacci and Lucas numbers}
The following two corollaries involving Fibonacci, Lucas, Pell and Pell-Lucas numbers follow from the corresponding identities involving respective polynomials.

\begin{corollary} The following identities hold true for any positive integer $m$
\begin{eqnarray}
\sum_{j=1}^{2m-1}\frac{1}{\frac{1}{4}+\sin ^{2}\frac{j\pi }{2m}} &=&\frac{4}{%
5}\frac{\left( 2m-1\right) F_{2m+1}+\left( 2m+1\right) F_{2m-1}}{F_{2m}}-%
\frac{16}{5},  \label{sum3} \\
\sum_{j=0}^{2m-1}\frac{1}{1+4\sin ^{2}\frac{j\pi }{2m}} &=&\frac{2mL_{2m}}{%
5F_{2m}},  \label{sum4} \\
\sum_{j=0}^{2m-1}\frac{1}{1+\sin ^{2}\frac{j\pi }{2m}} &=&\frac{mQ_{2m}}{%
P_{2m}}.  \label{sum5}
\end{eqnarray}
\begin{proof}
The first identity is derived by substituting $x=\frac{1}{2}$ in \eqref{sumeq2} and
using that $F_{n}=F_{n}\left( 1\right) $. The identity \eqref{sum4} follows by substituting $x=1$ in \eqref{sumeq3}. Equation \eqref{sum5} is deduced from \eqref{sumeq4} by substituting $x=1$, using
that $P_{m}=P_{m}\left( 1\right) $ and $2Q_{m}=Q_{m}\left( 1\right) $ and
performing trivial calculations.
\end{proof}
\end{corollary}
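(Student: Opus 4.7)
The plan is to derive all three identities by specializing the polynomial identities of the preceding corollary at appropriate rational values of $x$ and translating polynomial values into the corresponding integer sequences via $F_n=F_n(1)$, $L_n=L_n(1)$, $P_n=P_n(1)$, and $2Q_n=Q_n(1)$. No new analysis is required; the work is purely algebraic bookkeeping on top of the already-established polynomial identities.

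For \eqref{sum4}, I would set $x=1$ in \eqref{sumeq3}. The correction term $\frac{1}{x^2+4}$ becomes $\frac{1}{5}$, while the main fraction collapses to $\frac{2mL_{2m}-F_{2m}}{5F_{2m}}$. Combining the two over the common denominator $5F_{2m}$, the $-F_{2m}$ and the $F_{2m}$ coming from $\frac{1}{5}=\frac{F_{2m}}{5F_{2m}}$ cancel, leaving $\frac{2mL_{2m}}{5F_{2m}}$. For \eqref{sum5}, the same trick with $x=1$ in \eqref{sumeq4}, together with the conventions $P_{2m}(1)=P_{2m}$ and $Q_{2m}(1)=2Q_{2m}$, turns the first fraction into $\frac{2mQ_{2m}-P_{2m}}{2P_{2m}}$; adding $\frac{1}{2}=\frac{P_{2m}}{2P_{2m}}$ again cancels the stray $P_{2m}$ and produces $\frac{mQ_{2m}}{P_{2m}}$.

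For \eqref{sum3}, I would set $x=\tfrac{1}{2}$ in \eqref{sumeq2}, so that $2x=1$ and hence $F_n(2x)=F_n$. The right-hand side becomes
\begin{equation*}
\frac{(2m-1)F_{2m+1}+(2m+1)F_{2m-1}}{2\cdot\tfrac12\cdot\tfrac54\,F_{2m}}+\frac{1}{\tfrac14+1}=\frac{4}{5}\cdot\frac{(2m-1)F_{2m+1}+(2m+1)F_{2m-1}}{F_{2m}}+\frac{4}{5}.
\end{equation*}
The only subtle point is the mismatch in summation ranges: \eqref{sumeq2} sums from $j=0$ while \eqref{sum3} sums from $j=1$. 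The missing $j=0$ term is $1/(\tfrac14+0)=4$, so subtracting it from both sides turns the trailing constant into $\tfrac{4}{5}-4=-\tfrac{16}{5}$, exactly matching \eqref{sum3}.

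There is no real obstacle in this proof; the only pitfalls are clerical, namely keeping track of the shifted starting index in the derivation of \eqref{sum3} and remembering the factor-of-two convention $Q_n(1)=2Q_n$ when specializing \eqref{sumeq4} to \eqref{sum5}. Once these are handled, all three identities fall out by direct substitution and one line of simplification.
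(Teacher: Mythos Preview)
Your proposal is correct and follows exactly the same approach as the paper: each identity is obtained from the corresponding polynomial identity in the preceding corollary by substituting $x=\tfrac12$ in \eqref{sumeq2}, $x=1$ in \eqref{sumeq3}, and $x=1$ in \eqref{sumeq4}, together with the conventions $F_n=F_n(1)$, $L_n=L_n(1)$, $P_n=P_n(1)$, $2Q_n=Q_n(1)$. You have simply written out the short simplifications (including the index-shift for \eqref{sum3}) that the paper leaves implicit.
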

The following corollary follows by combining the above results with the identity \eqref{sum1}.
\begin{corollary}
The following identities hold true for any positive integer $m$:
  \begin{equation}
2P_{2m}P_{4m}=Q_{2m}\left( Q_{4m}-1\right), \label{eq3}
\end{equation}
  \begin{equation}
\left( 2m-1\right) F_{2m+1}+\left( 2m+1\right) F_{2m-1}+F_{2m}=\frac{%
10mF_{4m}F_{2m}}{L_{4m}-2},  \label{eq5}
\end{equation}
\begin{equation}
\left( 2m-1\right) F_{2m}F_{2m+1}+\left( 2m+1\right)
F_{2m-1}F_{2m}+F_{2m}^{2}=2mF_{4m}.  \label{eq6}
\end{equation}
\end{corollary}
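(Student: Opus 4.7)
The plan is to evaluate each of the finite trigonometric sums in \eqref{sum3}, \eqref{sum4}, \eqref{sum5} by a \emph{second} route, namely by directly applying \eqref{sum1} with a suitable parameter $s$ and with $m$ replaced by $2m$, and then match the two evaluations.

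For \eqref{eq3}, I start from \eqref{sum5}. Rewriting $\frac{1}{1+\sin^2 x}=\frac{2}{2+2\sin^2 x}$, I apply \eqref{sum1} with $s=2$, $\ell=0$, and $m$ replaced by $2m$; since $U_{-1}\equiv 0$, this gives
$$
\sum_{j=0}^{2m-1}\frac{1}{1+\sin^2\frac{j\pi}{2m}}=\frac{4m\,U_{2m-1}(3)}{T_{2m}(3)-1}.
$$
Using the Pell/Pell--Lucas evaluations \eqref{CPL1}, namely $U_{2m-1}(3)=\tfrac12 P_{4m}$ and $T_{2m}(3)=Q_{4m}$, and then equating with the right-hand side $mQ_{2m}/P_{2m}$ of \eqref{sum5}, identity \eqref{eq3} follows after clearing denominators.

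For \eqref{eq5}, I do the analogous computation starting from \eqref{sum3}. Writing $\frac{1}{1/4+\sin^2 x}=\frac{4}{1+4\sin^2 x}=\frac{2}{1/2+2\sin^2 x}$ and applying \eqref{sum1} with $s=1/2$, $\ell=0$, $m\mapsto 2m$, I obtain
$$
\sum_{j=0}^{2m-1}\frac{1}{1/4+\sin^2\frac{j\pi}{2m}}=\frac{4m\,U_{2m-1}(3/2)}{T_{2m}(3/2)-1}.
$$
Then I invoke \eqref{CFL1} to replace $U_{2m-1}(3/2)=F_{4m}$ and $T_{2m}(3/2)=L_{4m}/2$, yielding a closed form $\frac{8mF_{4m}}{L_{4m}-2}$. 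Splitting off the $j=0$ term (which equals $4$) from this sum and comparing with \eqref{sum3} produces an equality that simplifies, after collecting the constant $\tfrac{4}{5}$ over the common denominator $5F_{2m}$, exactly to \eqref{eq5}.

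For \eqref{eq6}, I multiply \eqref{eq5} through by $F_{2m}(L_{4m}-2)$ and then invoke the classical Lucas--Fibonacci identity $L_{2n}-2(-1)^n=5F_n^2$ with $n=2m$, which gives $L_{4m}-2=5F_{2m}^2$; this cancels the factor of $5F_{2m}^2$ on the right-hand side and yields \eqref{eq6}.

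The main obstacle is purely bookkeeping: I must carefully track the factor-of-two rescaling between the sums in the corollary and the normalized form of \eqref{sum1}, and separate the $j=0$ contribution in the case of \eqref{sum3} since that sum omits it. The algebraic content is light once the two evaluations of each sum are placed side by side, and the only "external" input needed for \eqref{eq6} is the standard identity $L_{4m}=5F_{2m}^2+2$.
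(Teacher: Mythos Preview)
Your proposal is correct and follows essentially the same route as the paper: for each identity you evaluate the relevant trigonometric sum a second time via \eqref{sum1} with the appropriate choice of $s$ (namely $s=2$ for \eqref{eq3} and $s=1/2$ for \eqref{eq5}), invoke the Chebyshev specializations \eqref{CPL1} and \eqref{CFL1}, and compare with \eqref{sum5} and \eqref{sum3}; then \eqref{eq6} is obtained from \eqref{eq5} by the substitution $L_{4m}=5F_{2m}^2+2$, exactly as in the paper.
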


\begin{proof} Setting $s=2$, $\ell =0$ and substituting $m$ by $2m$ in \eqref{sum1}, and using \eqref{CPL1} we obtain that
\begin{equation*}
\sum_{j=0}^{2m-1}\frac{1}{1+\sin ^{2}\frac{\pi j}{m}}=\frac{4m
U_{2m-1}\left( 3\right) }{T_{2m}\left( 3\right) -1} = \frac{2mP_{4m}}{
Q_{4m}-1}.
\end{equation*}%
Comparing this with \eqref{sum5} yields the identity \eqref{eq3}.

Setting $s=\frac{1}{2}$, $\ell =0$ and substituting $m$ by $2m$ in %
\eqref{sum1}, we obtain%
\begin{equation*}
\frac{1}{2m}\sum_{j=0}^{2m-1}\frac{1}{\frac{1}{2}+2\sin ^{2}\frac{\pi j}{2m}}%
=\frac{U_{2m-1}\left( \frac{3}{2}\right) }{T_{2m}\left( \frac{3}{2}\right) -1%
},
\end{equation*}%
where we use that $U_{-1}\left( x\right) =0$ for all $x$.
Rearranging the sum on the left-hand side of the above display and using relations \eqref{CFL1} we get
\begin{equation*}
\sum_{j=1}^{2m-1}\frac{1}{\frac{1}{4}+\sin ^{2}\frac{\pi j}{2m}}=\frac{%
8mF_{4m}}{L_{4m}-2}-4.
\end{equation*}%
Comparing this with \eqref{sum3} we obtain%
\begin{equation*}
\frac{4}{5}\frac{\left( 2m-1\right) F_{2m+1}+\left( 2m+1\right) F_{2m-1}}{%
F_{2m}}-\frac{16}{5}=\frac{8mF_{4m}}{L_{4m}-2}-4.
\end{equation*}%
It is now straightforward to show that the last identity implies \eqref{eq5}.
Substituting the well-known formula $L_{4m}=5F_{2m}^{2}+2$ (see e.g. \cite[%
formula 15 on p. 109]{K1}) into \eqref{eq5}, we obtain \eqref{eq6}.
\end{proof}

\subsection{Some identities involving Fibonacci hyperbolic functions}

In this section, by starting with \eqref{wu}, we deduce identities involving
Fibonacci and Lucas numbers and Fibonacci hyperbolic sine and cosine
functions. We prove the following proposition.

\begin{proposition}
\label{prop2} For a positive integer $m$, and positive integer $\ell <m/2,$ the following identities hold true:
\begin{itemize}
\item[i)] for $m$ divisible by $4$ and $\ell $ odd
$$\frac{F_{\frac{3m}{2}-3l-1}}{F_{\frac{3m}{2}}}=\frac{L_{3m-3\ell
}+L_{3\ell }}{L_{3m}-2};$$
\noindent
\item[ii)] for $m$ divisible by $4$ and $\ell $ even
$$\frac{\mathrm{cFh}\left( \frac{3m-6l-2}{4}\right) }{F_{\frac{3m%
}{2}}}=\frac{\sqrt{5}\left( F_{3m-3\ell }+F_{3\ell }\right) }{L_{3m}-2};$$
\noindent
\item[iii)] for $m$ even and not divisible by $4$ and $\ell $ even $$\frac{F_{\frac{3m}{2}-3l-1}}{\mathrm{sFh}\left( \frac{3m}{4}%
\right) }=\frac{\sqrt{5}\left( F_{3m-3\ell }+F_{3\ell }\right) }{L_{3m}-2};$$
\item[iv)] $$\frac{\mathrm{cFh}\left( \frac{3m-6l-2}{4}\right) }{\mathrm{sFh}%
\left( \frac{3m}{4}\right) }=\left\{
\begin{array}{ll}
\frac{\sqrt{5}F_{3m-3\ell }+L_{3\ell }}{\sqrt{5}F_{3m}-2}, & m\text{ {not
divisible by}}{\ 4},\ell \text{ odd} \\
\frac{L_{3m-3\ell }+\sqrt{5}F_{3\ell }}{\sqrt{5}F_{3m}-2}, & m\text{ odd, }%
\ell \text{ even}.
\end{array}%
\right.$$
\end{itemize}
\end{proposition}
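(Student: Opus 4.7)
The plan is to specialize Wu's identity \eqref{wu} at $\lambda = 3\log\phi$, use the resolvent--kernel sum \eqref{sum1} to re-evaluate the same trigonometric expression, and then translate both sides into the language of Fibonacci, Lucas, and Fibonacci-hyperbolic values via \eqref{CFL4} and \eqref{Fh1}--\eqref{Fh2}.

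The choice $\lambda = 3\log\phi$ is forced by the golden-ratio identity $\phi^3 = 2 + \sqrt 5$, which yields $\cosh\lambda = \sqrt 5$ and $\sinh\lambda = 2$. With this value, $\cosh\lambda - \cos(2j\pi/m) = (\sqrt 5 - 1) + 2\sin^2(j\pi/m)$, so the left-hand side of \eqref{wu} falls under the scope of \eqref{sum1}. Symmetrizing \eqref{sum1} in $\ell \mapsto m - \ell$ (which conjugates the exponential weight) gives the cosine version
$$
\frac{1}{m}\sum_{j=0}^{m-1}\frac{\cos(2\ell j\pi/m)}{s + 2\sin^2(j\pi/m)} = \frac{U_{m-\ell-1}(s+1) + U_{\ell-1}(s+1)}{T_m(s+1) - 1},
$$
which at $s+1 = \sqrt 5$ exactly matches the left-hand side of \eqref{wu}. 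Equating the two evaluations produces the master relation
$$
\frac{U_{m-\ell-1}(\sqrt 5) + U_{\ell-1}(\sqrt 5)}{T_m(\sqrt 5) - 1} = \frac{\cosh\!\bigl((\tfrac{3m}{2}-3\ell)\log\phi\bigr)}{2\sinh(\tfrac{3m}{2}\log\phi)}.
$$

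The Chebyshev side is evaluated via \eqref{CFL4}: $T_n(\sqrt 5)$ equals $L_{3n}/2$ or $\sqrt 5 F_{3n}/2$ according as $n$ is even or odd, and likewise $U_n(\sqrt 5)$ splits into $L_{3n+3}/4$ or $\sqrt 5 F_{3n+3}/4$. The hyperbolic side is unfolded via $2\cosh(k\log\phi) = \phi^k + \phi^{-k}$ and $2\sinh(k\log\phi) = \phi^k - \phi^{-k}$ combined with the Binet-type identities $L_n = \phi^n + (-1)^n\phi^{-n}$ and $\sqrt 5 F_n = \phi^n - (-1)^n\phi^{-n}$; together these convert hyperbolic values at integer multiples of $\log\phi$ into Lucas or $\sqrt 5$ times Fibonacci numbers depending on parity. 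When $3m/4$ or $(3m-6\ell-2)/4$ is a non-negative integer, \eqref{Fh2} then replaces $\mathrm{sFh}$ and $\mathrm{cFh}$ by ordinary Fibonacci numbers; otherwise they remain as hyperbolic values at half-integer arguments, matching the form in the statement.

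The four listed identities now correspond to the parity patterns of $(m \bmod 4, \ell \bmod 2)$. When $m \equiv 0 \pmod 4$ the argument $3m/4$ is an integer and $\mathrm{sFh}(3m/4) = F_{3m/2}$; then $\ell$ odd versus $\ell$ even switches the parity of $3m/2 - 3\ell$, yielding cases (i) and (ii). When $m \equiv 2 \pmod 4$ and $\ell$ is even one obtains the mirror situation, with $(3m-6\ell-2)/4$ integer, giving (iii). Finally, $m$ odd is case (iv): both $3m/4$ and $(3m-6\ell-2)/4$ are half-integers, so $\mathrm{sFh}$ and $\mathrm{cFh}$ both survive in the answer, and the two sub-cases record the parities of $\ell-1$ and $m-\ell-1$, which select Lucas or Fibonacci for each of the two $U$-values on the Chebyshev side. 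I expect the main obstacle to be purely clerical rather than conceptual: keeping the $\sqrt 5$ factors, the $(-1)^n$ signs from the Binet identities, and the rewriting of $T_m(\sqrt 5)-1$ as either $(L_{3m}-2)/2$ or $(\sqrt 5 F_{3m}-2)/2$ consistent across all four cases; once the master relation is in place, each of (i)--(iv) reduces to a direct substitution.
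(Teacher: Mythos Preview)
Your approach is exactly the paper's: specialize \eqref{wu} at $\lambda=3\log\phi$ (using $\cosh(3\log\phi)=\sqrt5$, $\sinh(3\log\phi)=2$), identify the left side with \eqref{sum1} at $s+1=\sqrt5$, evaluate the Chebyshev quotient via \eqref{CFL4}, rewrite the right side through \eqref{Fh1}--\eqref{Fh2}, and then sort the parity cases. One small bookkeeping slip in your case inventory: the first sub-case of (iv) covers all $m\not\equiv 0\pmod 4$ with $\ell$ odd, not only $m$ odd, so the situation $m\equiv 2\pmod 4$, $\ell$ odd also lands there (both $3m/4$ and $(3m-6\ell-2)/4$ are half-integers in that case), which your final paragraph omits.
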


\begin{proof}
We consider \eqref{wu} when $\lambda =3\log \phi $, where $\phi $ is golden
ratio. Since $\cosh (3\log \phi )=\sqrt{5}$ and $\sinh (3\log \phi )=2$, the
left-hand side of \eqref{wu} becomes%
\begin{gather*}
\frac{1}{m}\sum_{j=0}^{m-1}\frac{\cos \left( 2\ell \frac{j\pi }{m}\right) }{%
\sqrt{5}-\cos \left( 2\frac{j\pi }{m}\right) }=\frac{1}{m}\sum_{j=0}^{m-1}%
\frac{e^{2\ell \frac{ij\pi }{m}}+e^{-2\ell \frac{ij\pi }{m}}}{2\left( \sqrt{5%
}-\cos \left( 2\frac{j\pi }{m}\right) \right) } \\
=\frac{1}{m}\sum_{j=0}^{m-1}\frac{e^{2\ell \frac{ij\pi }{m}}+e^{2\ell \frac{%
i\left( m-j\right) \pi }{m}}}{2\left( \sqrt{5}-\cos \left( 2\frac{j\pi }{m}%
\right) \right) }=\frac{1}{m}\sum_{j=0}^{m-1}\frac{e^{2\ell \frac{ij\pi }{m}}%
}{\sqrt{5}-\cos \left( 2\frac{j\pi }{m}\right) }=\frac{1}{m}\sum_{j=1}^{m-1}%
\frac{e^{2\pi i\frac{j\ell }{m}}}{\sqrt{5}-1+2\sin ^{2}\frac{\pi j}{m}}.
\end{gather*}%
By applying \eqref{sum1} and \eqref{CFL4} we get
\begin{eqnarray}
\frac{1}{m}\sum_{j=1}^{m-1}\frac{e^{2\pi i\frac{j\ell }{m}}}{\sqrt{5}%
-1+2\sin ^{2}\frac{\pi j}{m}} &=&\frac{U_{m-\ell -1}\left( \sqrt{5}\right)
+U_{\ell -1}\left( \sqrt{5}\right) }{T_{m}\left( \sqrt{5}\right) -1}
\label{eq1} \\
&=&\left\{
\begin{array}{ll}
\frac{\sqrt{5}\left( F_{3m-3\ell }+F_{3\ell }\right) }{2L_{3m}-4}, & m,\ell
\text{ even} \\
\frac{L_{3m-3\ell }+L_{3\ell }}{2L_{3m}-4}, & m\text{ even, }\ell \text{ odd}
\\
\frac{\sqrt{5}F_{3m-3\ell }+L_{3\ell }}{2\sqrt{5}F_{3m}-4}, & m,\ell \text{
odd} \\
\frac{L_{3m-3\ell }+\sqrt{5}F_{3\ell }}{2\sqrt{5}F_{3m}-4}, & m\text{ odd, }%
\ell \text{ even}%
\end{array}%
\right. .  \notag
\end{eqnarray}%
The right-hand side of \eqref{wu} for $\lambda =3\log \phi $ can be
expressed by Fibonacci hyperbolic functions (see \eqref{Fh1}) as
\begin{equation*}
\frac{\cosh \left[ \left( \frac{3m}{2}-3\ell \right) \log \phi \right] }{%
2\sinh \frac{3m\log \phi }{2}}=\frac{\mathrm{cFh}\left( \frac{3m-6\ell -2}{4}%
\right) }{2\ \mathrm{sFh}\left( \frac{3m}{4}\right) }.
\end{equation*}%
Using \eqref{Fh2}, we get following special cases%
\begin{equation}
\frac{\mathrm{cFh}\left( \frac{3m-6\ell -2}{4}\right) }{2\ \mathrm{sFh}%
\left( \frac{3m}{4}\right) }=\left\{
\begin{array}{ll}
\frac{F_{\frac{3m}{2}-3l-1}}{2F_{\frac{3m}{2}}}, & m\equiv 0\pmod{4}\text{ and }\ell \text{ odd} \\
\frac{\mathrm{cFh}\left( \frac{3m-6l-2}{4}\right) }{2F_{\frac{3m}{2}}}, &
m\equiv 0\pmod{4}\text{ and }\ell \text{ even} \\
\frac{F_{\frac{3m}{2}-3l-1}}{2\mathrm{sFh}\left( \frac{3m}{4}\right) }, & {m}%
\text{ {even, not divisible by }}{4\ }\text{{and}}{\ \ell \ }\text{{even}}%
\end{array}%
\right. .  \label{eq2}
\end{equation}
Combining \eqref{eq1} and \eqref{eq2} we see that i)-iv) hold.
\end{proof}

\begin{remark}
If we substitute $\lambda =\log \phi $ in formula \eqref{wu} and using \eqref{CFL3}, we obtain the
same result as in Proposition \ref{prop2} with $3m$ replaced by $m$ and $3\ell $
replaced by $\ell $.
\end{remark}

\subsection{Identities involving Bejaia and Pisa numbers}

In this section, we derive some further identities involving Chebyshev polynomials, Fibonacci and Lucas numbers and the so-called Bejaia and Pisa numbers which are generalizations of the bisected
Fibonacci and Lucas numbers respectively. For a positive integer $N$, Bejaia and Pisa numbers  $\mathcal{B}%
_{\ell }(N)$ and $\mathcal{P}_{\ell }(N)$, respectively were introduced by Chair \cite[relations (17) and (18)]{NC1} and defined as:
\begin{eqnarray*}
\mathcal{B}_{\ell }(N) &=&\frac{1}{\sqrt{N\left( N-4\right) }}\left( \left(
\frac{N-2+\sqrt{N\left( N-4\right) }}{2}\right) ^{\ell }-\left( \frac{N-2-%
\sqrt{N\left( N-4\right) }}{2}\right) ^{\ell }\right) , \\
\mathcal{P}_{\ell }(N) &=&\left( \frac{N-2+\sqrt{N\left( N-4\right) }}{2}%
\right) ^{\ell }+\left( \frac{N-2-\sqrt{N\left( N-4\right) }}{2}\right)
^{\ell }.
\end{eqnarray*}
Those numbers are related to a trigonometric sum similar to \eqref{sumNeven} and occur in the (exact) formula for the two-point resistance of the complete graph minus $N$ edges of the opposite vertices, for $N$ odd. Namely, Chair in \cite[relation (25)]{NC1} derived the following formula, valid for $\ell <\frac{N+1}{2}$:
\begin{equation}
\frac{4}{N}\sum\limits_{n=0}^{\frac{N-1}{2}}\frac{\sin ^{2}\left( \frac{%
2n\ell \pi }{N}\right) }{N-4\sin ^{2}\left( \frac{n\pi }{N}\right) }=\frac{%
\mathcal{B}_{2\ell }(N)}{2}-\frac{\sqrt{N\left( N-4\right) }}{2}\mathcal{B}%
_{\ell }^{2}(N)D_{N},  \label{BN1}
\end{equation}%
where%
\begin{equation*}
D_{N}=\left(1-\left( \frac{N-2-\sqrt{N\left( N-4\right) }}{2}\right) ^{N}\right) \left(
1+\left( \frac{N-2-\sqrt{N\left( N-4\right) }}{2}\right) ^{N}\right)^{-1}.
\end{equation*}
As above, it is possible to express the trigonometric sum on the left-hand side of \eqref{BN1} in terms of Chebyshev polynomials and deduce the following corollary.
\begin{corollary}
For positive integers $N$ and $\ell <\frac{N+1}{2}$ the following identity holds true
\begin{multline}\label{eq. odd N chair id}
  \frac{4}{N}\sum\limits_{n=0}^{\frac{N-1}{2}}\frac{\sin ^{2}\left( \frac{%
2n\ell \pi }{N}\right) }{N-4\sin ^{2}\left( \frac{n\pi }{N}\right) }= \\=\frac{U_{N-2\ell
-1}\left( -\frac{N}{2}+1\right) +U_{2\ell -1}\left( -\frac{N}{2}+1\right)
-U_{N-1}\left( -\frac{N}{2}+1\right) }{2T_{N}\left( -\frac{N}{2}+1\right) -2}.
\end{multline}
\end{corollary}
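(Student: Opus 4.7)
The plan is to mirror the proof of Theorem~\ref{th2}, using the closed evaluation \eqref{sum1} to express the trigonometric sum on the left-hand side of \eqref{eq. odd N chair id} as a rational function of Chebyshev polynomials. Since the upper summation index $(N-1)/2$ is assumed to be an integer, I would first note that $N$ is odd. Using the reflection symmetries $\sin^2\bigl(2(N-n)\ell\pi/N\bigr) = \sin^2(2n\ell\pi/N)$ and $\sin^2\bigl((N-n)\pi/N\bigr) = \sin^2(n\pi/N)$, together with the vanishing of the $n=0$ summand, the sum can be symmetrized to
$$
\frac{4}{N}\sum_{n=0}^{(N-1)/2}\frac{\sin^2(2n\ell\pi/N)}{N-4\sin^2(n\pi/N)} \;=\; \frac{2}{N}\sum_{n=0}^{N-1}\frac{\sin^2(2n\ell\pi/N)}{N-4\sin^2(n\pi/N)}.
$$

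Next, I would expand
$$
\sin^2(2n\ell\pi/N) \;=\; \tfrac{1}{2} - \tfrac{1}{4}\bigl(e^{2\pi i n(2\ell)/N} + e^{2\pi i n(N-2\ell)/N}\bigr)
$$
(using that $e^{-2\pi i n(2\ell)/N} = e^{2\pi i n(N-2\ell)/N}$ for integer $n$) and rewrite the denominator as $N - 4\sin^2(n\pi/N) = -2\bigl(-N/2 + 2\sin^2(n\pi/N)\bigr)$. This decomposes the symmetrized sum into a linear combination of three sums of exactly the shape on the left-hand side of \eqref{sum1}, each with $s = -N/2$ and $m = N$, and with twist indices $L \in \{0,\, 2\ell,\, N-2\ell\}$. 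Applying \eqref{sum1} to each of these — using $U_{-1} \equiv 0$ to handle the untwisted ($L=0$) term — and invoking the invariance of the numerator $U_{m-L-1}(s+1) + U_{L-1}(s+1)$ under $L \mapsto m-L$ to identify the $L = 2\ell$ and $L = N-2\ell$ contributions, the three pieces collapse into the single Chebyshev expression appearing on the right of \eqref{eq. odd N chair id}.

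The argument is structurally identical to the passage from \eqref{sumNeven} to \eqref{sumCheb} in the proof of Theorem~\ref{th2}, so no new idea is required. The only delicate bookkeeping is to track the minus sign introduced by the factor $-2$ in the denominator transformation together with the $\tfrac{1}{4}$-coefficients from the expansion of $\sin^2$, and to verify that the resulting overall factor of $\tfrac{1}{2}$ combines with the denominator $T_N(-N/2+1) - 1$ furnished by \eqref{sum1} to produce the denominator $2T_N(-N/2+1) - 2$ on the right of \eqref{eq. odd N chair id}. I expect this accounting of prefactors to be the only step requiring careful attention.
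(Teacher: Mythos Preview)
Your proposal is correct and follows precisely the approach the paper indicates: the paper's own proof merely states that the argument is analogous to the derivation of \eqref{sumCheb} with $s=-N/2$ in place of $s=-5/2$, and omits the details. Your write-up supplies those details faithfully, including the extra symmetrization step needed because the sum here runs only over $n\in\{0,\ldots,(N-1)/2\}$ rather than $\{0,\ldots,N-1\}$; nothing more is required.
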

\begin{proof}
The proof is analogous to the proof of equation \eqref{sumCheb}, with the difference that in this case, we take $s=-N/2$ in \eqref{sum1}. We omit the details.
\end{proof}

By specifying $N$ in \eqref{eq. odd N chair id}, and combining it with \eqref{BN1}, one can produce many interesting identities. In the following examples we present two possibilities.

\begin{example} Let us take $N=9$ and $\ell\in\{0,1,2,3,4\}$ in \eqref{eq. odd N chair id}. Then, \eqref{eq. odd N chair id} and  \eqref{BN1} for $N=9$ yield the identity
\begin{equation*}
\frac{%
\mathcal{B}_{2\ell }(9)}{2}-\frac{3\sqrt{5}}{2}\mathcal{B}%
_{\ell }^{2}(9)D_{9}=-\frac{1}{2}\left( \frac{U_{8}\left( -\frac{7}{2}\right) - U_{8-2\ell
}\left( -\frac{7}{2}\right) -U_{2\ell -1}\left( -\frac{7}{2}\right)}{T_{9}\left( -\frac{7}{2}\right) -1} \right).
\end{equation*}
Using symmetry properties \eqref{sym1} and \eqref{sym2}, together with  relations \eqref{CFL2} we get
\begin{equation*}
\frac{%
\mathcal{B}_{2\ell }(9)}{2}-\frac{3\sqrt{5}}{2}\mathcal{B}%
_{\ell }^{2}(9)D_{9}=\frac{1}{3}\frac{%
-F_{36-8\ell }+F_{8\ell }+F_{36}}{L_{36}+2}.
\end{equation*}
Therefore, we obtain the following identity
\begin{equation*}
2\left( F_{8\ell }+F_{36}-F_{36-8\ell }\right) =\left( 3\mathcal{B}_{2\ell
}(9)-9\sqrt{5}\mathcal{B}_{\ell }^{2}(9)D_{9}\right) \left( L_{36}+2\right).
\end{equation*}
\end{example}

\begin{example} In this example we take $N=L_{2k}+2$, where $k$ is such that $L_{2k}$ is odd and take $\ell <\frac{N+1}{2}$. By reasoning analogously as in the previous example, using special values \eqref{chebluc} and \eqref{chebfib} of Chebyshev polynomials, we derive the following identity
\begin{equation*}
\frac{F_{2Nk}+F_{2\ell k}-F_{2\left( N-2\ell \right) k}}{\left(
L_{2Nk}+2\right) F_{2k}}=\frac{1}{2}\left( \mathcal{B}_{2\ell }(N)-\sqrt{%
N\left( N-4\right) }\mathcal{B}_{\ell }^{2}(N)D_{N}\right).
\end{equation*}%
\end{example}

\end{document}